\newtheorem{thrm}{Theorem}
\newtheorem{lemma}[thrm]{Lemma}
\newtheorem{prop}[thrm]{Proposition}
\newtheorem{cor}[thrm]{Corollary}
\theoremstyle{definition}
\newtheorem*{defn}{Definition}
\newtheorem{ex}{Example}
\DeclareMathOperator{\coker}{coker}
\DeclareMathOperator{\Hom}{Hom}
\DeclareMathOperator{\im}{im}
\DeclareMathOperator{\Tr}{Tr}
\DeclareMathOperator{\rad}{rad}
\DeclareMathOperator{\rep}{rep}
\DeclareMathOperator{\lrep}{\Lambda-rep}
\DeclareMathOperator{\md}{mod}
\DeclareMathOperator{\Sub}{Sub}
\DeclareMathOperator{\codim}{codim}
\DeclareMathOperator{\GL}{GL}
\begin{document}

\newcommand{\A}{\mathbb{A}}
\newcommand{\N}{\mathbb{N}}
\newcommand{\M}{\mathcal{M}}
\newcommand{\U}{\mathcal{U}}
\newcommand{\Z}{\mathbb{Z}}
\newcommand{\R}{\mathbb{R}}
\newcommand{\C}{\mathbb{C}}
\renewcommand{\S}{\mathcal{S}}
\newcommand{\matr}[1]{\left(\begin{matrix}#1\end{matrix}\right)}
\newcommand{\smatr}[1]{\left(\begin{smallmatrix}#1\end{smallmatrix}\right)}
\newcommand{\orb}{\mathcal{O}}
\renewcommand{\le}[1]{\leq_{\mathrm{#1}}}

\renewcommand{\r}{\frak{r}}
\renewcommand{\P}{\mathcal{P}}

\title{Degenerations of submodules and composition series}
\author{Nils Nornes}
\author{Steffen Oppermann}

\thanks{This paper will form part of the first author's ph.d-thesis, written under the supervision of Professor S. O. Smal\o. The authors thank Professor Smal\o{} for inspiration and helpful remarks.}

\begin{abstract}
Let $M$ and $N$ be modules over an artin algebra such that $M$ degenerates to $N$. We show that any submodule of $M$ degenerates to a submodule of $N$. This suggests that a composition series of $M$ will in some sense degenerate to a composition series of $N$. 

We then study a subvariety of the module variety, consisting of those representations where all matrices are upper triangular. We show that these representations can be seen as representations of composition series, and that the orbit closures describe the above mentioned degeneration of composition series.
\end{abstract}

\maketitle

\section{Introduction}

Let $k$ be an algebraically closed field, and let $\Lambda$ be a finite dimensional associative $k$-algebra with unity. 
We denote by $\md \Lambda$ the category of finite dimensional unital left modules over $\Lambda$. For  natural numbers $m$ and $n$, let $\M_{m\times n}(k)$ denote the set of $m\times n$-matrices with entries in $k$, let $\M_n(k)$ denote the $k$-algebra of $n\times n$-matrices and $\U_n(k)\subseteq\M_n(k)$ the subalgebra of upper triangular matrices. $\GL_n(k)\subseteq\M_n(k)$ denotes the general linear group, and $U_d(k)\subseteq \GL_d(k)$ denotes the subgroup of upper triangular matrices.

Fix a natural number $d$. We want to study the set of left $\Lambda$-module structures on the vector space $k^d$. We have a one-to-one correspondence between this set and the set of $k$-algebra homomorphisms from $\Lambda$ to $\M_d(k)$. If $f$ is such a homomorphism, we obtain a module structure by setting $\lambda\cdot \mathbf{v}:=f(\lambda)\mathbf{v}$ for $\lambda\in\Lambda$ and $\mathbf{v}\in k^d$. Conversely, if we have a module structure, we get a $k$-algebra homomorphism $g$ by setting $g(\lambda):=\matr{\lambda\cdot \mathbf{u}_1 &\ldots&\lambda\cdot \mathbf{u}_d}$, where $\mathbf{u}_i$ is the $i$th unit column vector. Such a homomorphism is called a \emph{$d$-dimensional representation of $\Lambda$}, and we denote the set of all $d$-dimensional representations of $\Lambda$ by $\md_d\Lambda$.

Let $\{\lambda_1,\ldots,\lambda_n\}$ be a generating set of $\Lambda$. Then a representation $\rho\in\md_d\Lambda$ is completely determined by its values on $\lambda_i$, so we can view $\md_d\Lambda$ as a subset of $\M_d(k)^n$. This subset is Zariski closed, so $\md_d\Lambda$ has the structure of an affine variety. The group variety $\GL_d(k)$ acts on $\md_d\Lambda$ by conjugation, and its orbits correspond bijectively to the isomorphism classes of modules. We can now give the definition of degeneration of modules.
\begin{defn}
Let $M$ and $N$ be $\Lambda$-modules with representations $\mu$ and $\nu$ in $\md_d\Lambda$. $M$ \emph{degenerates to} $N$ if $\nu$ lies in the closure of the $\GL_d(k)$-orbit of $\mu$. This is denoted by $M\le{deg}N$.
\end{defn}

Degeneration is a partial order on the set of isomorphism classes of $d$-dimensional modules. The \emph{codimension} of a degeneration $M\le{deg}N$, denoted $\codim(M,N)$, is the codimension of the orbit corresponding to $N$ in the closure of the orbit corresponding to $M$. The dimension of an orbit $\GL_d(k)*\mu$ can be computed by the formula $\dim \GL_d(k)*\mu=d^2-[M,M]$, where $[M,M]$ denotes the $k$-dimension of $\Hom_\Lambda(M,M)$. From that we get $\codim(M,N)=[N,N]-[M,M]$.
 
In \cite{Zwara} G. Zwara, building on earlier work of C. Riedtmann in \cite{Riedtmann}, gave a nice module-theoretic description of this partial order:

\begin{thrm}\label{RZ}
Let $M$ and $N$ be $\Lambda$-modules. Then the  following are equivalent:
\begin{enumerate}
\item $M\le{deg}N$
\item There exists a short exact sequence $0\to N\to M\oplus Z\to Z\to 0$ in $\md \Lambda$ for some $Z\in\md\Lambda$.
\item There exists a short exact sequence $0\to X\to M\oplus X\to N\to 0$ in $\md \Lambda$ for some $X\in\md\Lambda$.
\end{enumerate}
\end{thrm}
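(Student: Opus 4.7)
The plan is to prove the cycle $(2) \Rightarrow (1) \Rightarrow (3)$ and then $(3) \Rightarrow (2)$ by an algebraic manipulation linking the two exact-sequence conditions. The two deep implications are the construction of a degeneration from an exact sequence, essentially due to Riedtmann, and the converse direction, due to Zwara, which draws on the algebraic geometry of orbit closures.

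For $(2) \Rightarrow (1)$, I would first establish the intermediate statement $M \oplus Z \le{deg} N \oplus Z$ via an explicit one-parameter family, and then invoke Bongartz's cancellation theorem to remove the common summand $Z$. Concretely, pick a $k$-linear splitting $M \oplus Z = N \oplus W$ compatible with the embedding $N \hookrightarrow M \oplus Z$ from the given sequence. Since $N$ is a submodule, each $\lambda \in \Lambda$ acts by a block-upper-triangular matrix $\smatr{a_\lambda & b_\lambda \\ 0 & d_\lambda}$ in this basis. Conjugating by the one-parameter subgroup $\mathrm{diag}(1, t^{-1})$ sends this to $\smatr{a_\lambda & tb_\lambda \\ 0 & d_\lambda}$, which specializes at $t = 0$ to the representation of $N \oplus W$. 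The exact sequence identifies $W \cong (M \oplus Z)/N \cong Z$ as $\Lambda$-modules, producing the degeneration.

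For $(1) \Rightarrow (3)$, the main obstacle, I would exploit the fact that any point in an orbit closure is approached along a curve. Working over the discrete valuation ring $R = k[[t]]$ with fraction field $K$, one finds a $\Lambda \otimes_k R$-module $L$, free of rank $d$ over $R$, whose special fiber $L/tL$ is isomorphic to $N$ and whose generic fiber $L \otimes_R K$ corresponds, after base change, to $M$. Multiplication by $t$ produces a short exact sequence $0 \to L \xrightarrow{t} L \to N \to 0$ of $\Lambda$-modules. The target $X$ is then extracted from $L$ by a truncation/Fitting-type argument: $X$ is essentially a suitable quotient of $L$ by a high power of $t$, chosen so that the middle term of the resulting sequence of finite-dimensional $\Lambda$-modules can be identified with $M \oplus X$. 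Producing the $R$-form is the geometric input; identifying the middle term is the algebraic heart of Zwara's argument.

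The remaining equivalence $(2) \Leftrightarrow (3)$ is handled by exact-sequence manipulation. Given $0 \to N \to M \oplus Z \to Z \to 0$, the second component $\beta \colon Z \to Z$ of the surjection is in general not invertible; iterating $\beta$ and truncating produces an auxiliary module $X$ from which the ``dual'' sequence $0 \to X \to M \oplus X \to N \to 0$ can be assembled via a pullback. The converse direction is analogous. In summary, the principal difficulty is the geometric-to-algebraic step $(1) \Rightarrow (3)$; the implication $(2) \Rightarrow (1)$ is concrete given cancellation, and the interchange of $(2)$ and $(3)$ is a diagram chase.
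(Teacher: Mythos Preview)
The paper does not give its own proof of this theorem; it is stated as a known result due to Riedtmann and Zwara, with citations to \cite{Riedtmann} and \cite{Zwara}. So there is no in-paper argument to compare your proposal against.

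That said, your sketch of $(2)\Rightarrow(1)$ contains a real gap. You correctly observe that the exact sequence $0\to N\to M\oplus Z\to Z\to 0$ yields $M\oplus Z\le{deg}N\oplus Z$, but you then propose to ``invoke Bongartz's cancellation theorem'' to remove the common summand $Z$. No such general cancellation theorem is available: as the paper stresses immediately after stating Theorem~\ref{RZ}, $X\oplus M\le{deg}X\oplus N$ does \emph{not} imply $M\le{deg}N$ in general, and this failure is exactly what prompts the definition of virtual degeneration $\le{vdeg}$. Riedtmann's actual argument bypasses cancellation. Writing the epimorphism as $(p,q)\colon M\oplus Z\to Z$, one looks at the family of maps $(p,\,q+t\cdot 1_Z)$; for all but finitely many $t\in k$ this is split surjective with kernel isomorphic to $M$, while at $t=0$ the kernel is $N$, giving $M\le{deg}N$ directly. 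The dual version, starting from the form in (3) and deforming $\smatr{f\\g}$ to $\smatr{f+t\cdot 1_X\\g}$, is precisely the construction the paper adapts later in the proof of Theorem~\ref{RZ-triangle}.

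Your outlines for $(1)\Rightarrow(3)$ and for the interchange $(2)\Leftrightarrow(3)$ are in the right spirit and match the structure of Zwara's original argument, though as you note the geometric-to-algebraic step hides substantial work.
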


The short exact sequences in Theorem \ref{RZ} are called \emph{Riedtmann-sequences}. In this paper we will use Riedtmann-sequences of the form $0\to X\to M\oplus X\to N\to 0$, but all our results work equally well for sequences of the other form.

Now one can extend the notion of degeneration to algebras over arbitrary fields, and even over commutative artin rings, by using the existence of Riedtmann-sequences as the definition. G. Zwara showed  in \cite{Zwara1} that degeneration is a partial order also in this case. Here we define the codimension of $M\le{deg}N$ to be $[N,N]-[M,M]$ (where $[X,X]$ denotes length of $\Hom_\Lambda(X,X)$ as a $k$-module.)

One problem with the degeneration order is that in general one cannot cancel common summands, that is $X\oplus M\le{deg} X\oplus N$ does not imply $M\le{deg}N$. This led to the introduction of a new partial order called \emph{virtual degeneration} in \cite{Riedtmann}.

\begin{defn} Let $M$ and $N$ be $\Lambda$-modules. $M$ \emph{virtually degenerates to} $N$ if there exists a module $X\in\md\Lambda$ such that $X\oplus M\le{deg} X\oplus N$. This is denoted by $M\le{vdeg}N$.
\end{defn}

The following proposition gives an alternative way of describing virtual degenerations. For a proof of the proposition see \cite{SmalVal}, section 2.

\begin{prop}
Let $M$ and $N$ be $\Lambda$-modules. Then $M\le{vdeg}N$ if and only if there is some finitely presented functor $\delta:\md\Lambda \to \md k$ such that $\ell(\delta(X))=[X,N]-[X,M]$ for all $X\in\md\Lambda$.
\end{prop}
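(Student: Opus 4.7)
The plan is to prove both directions through Theorem~\ref{RZ}.

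\smallskip

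\emph{Forward.} Given $M\le{vdeg}N$, pick $X$ with $X\oplus M\le{deg}X\oplus N$ and, via Theorem~\ref{RZ}, a Riedtmann-sequence $0\to Y\to X\oplus M\oplus Y\xrightarrow{\pi}X\oplus N\to 0$. Define
$$\delta(W):=\coker\!\bigl(\Hom_\Lambda(W,X\oplus M\oplus Y)\xrightarrow{\pi_\ast}\Hom_\Lambda(W,X\oplus N)\bigr).$$
This is finitely presented as the cokernel of a map of representable functors. Applying $\Hom_\Lambda(W,-)$ to the Riedtmann-sequence produces the four-term exact sequence $0\to\Hom_\Lambda(W,Y)\to\Hom_\Lambda(W,X\oplus M\oplus Y)\xrightarrow{\pi_\ast}\Hom_\Lambda(W,X\oplus N)\to\delta(W)\to 0$; taking alternating lengths and cancelling the $X$- and $Y$-contributions yields $\ell(\delta(W))=[W,N]-[W,M]$.

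\smallskip

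\emph{Backward.} Let $\delta$ be finitely presented with the required length identity, and choose (via Yoneda) a presentation $\Hom_\Lambda(-,A)\xrightarrow{\psi_\ast}\Hom_\Lambda(-,B)\to\delta\to 0$ coming from a morphism $\psi\colon A\to B$. With $K:=\ker\psi$, the kernel of $\psi_\ast$ equals $\Hom_\Lambda(-,K)$, giving an exact sequence of functors
$$0\to\Hom_\Lambda(-,K)\to\Hom_\Lambda(-,A)\to\Hom_\Lambda(-,B)\to\delta\to 0.$$
Evaluating at $W$ and equating alternating lengths produces $[W,K\oplus B\oplus M]=[W,A\oplus N]$ for all $W\in\md\Lambda$. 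By the standard fact that Hom-dimensions from every module determine a module up to isomorphism over an artin algebra, this forces $K\oplus B\oplus M\cong A\oplus N$.

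\smallskip

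To convert this into a degeneration, observe that evaluating the length identity at the indecomposable projectives of $\Lambda$ shows that $\delta$ vanishes on projectives if and only if $\dim M=\dim N$---which is necessary for $M\le{vdeg}N$---and in that case $\psi$ is automatically surjective (its cokernel would otherwise have nonzero $\Hom$ from some projective cover). The short exact sequence $0\to K\to A\xrightarrow{\psi}B\to 0$ then gives, after adding a $K$-summand to the middle and right terms, the Riedtmann-sequence
$$0\to K\to A\oplus K\xrightarrow{(a,k)\mapsto(\psi(a),k)}B\oplus K\to 0,$$
witnessing $A\le{deg}B\oplus K$. Taking the direct sum with $M$ and invoking the isomorphism yields $A\oplus M\le{deg}B\oplus K\oplus M\cong A\oplus N$, so $M\le{vdeg}N$ with witness $X=A$. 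The main obstacle is the Hom-determination lemma used to pass from the length equality to an honest isomorphism; this is a classical but nontrivial Auslander-type result invoked as a black box.
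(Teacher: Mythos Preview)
The paper does not prove this proposition; it simply refers the reader to \cite{SmalVal}, Section~2. So there is no in-paper argument to compare against, and I comment directly on your attempt.

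Your forward direction is clean and correct: the cokernel of $\pi_*$ is finitely presented by construction, and the alternating-length computation from the four-term exact sequence gives exactly $[W,N]-[W,M]$.

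In the backward direction there is a circularity. To make $\psi$ surjective you need $\delta$ to vanish on projectives, and you justify this by saying that $\dim M=\dim N$ ``is necessary for $M\le{vdeg}N$''. But $M\le{vdeg}N$ is precisely what you are trying to prove; you cannot invoke it as a hypothesis. Moreover, the implication genuinely fails without an equal-length assumption: take $\Lambda=k$, $M=0$, $N=k$, and $\delta=\Hom_\Lambda(-,k)$. Then $\ell(\delta(X))=\dim X=[X,N]-[X,M]$ for every $X$, yet $M\not\le{vdeg}N$ since the two modules have different lengths. So the proposition must be read with the standing hypothesis $\ell(M)=\ell(N)$ (this is how it is used throughout the paper, where one always works inside a fixed $\md_d\Lambda$); no argument can manufacture that equality from the functor condition alone.

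Granting $\ell(M)=\ell(N)$, your argument goes through. Each $\ell(\delta(P_i))=[P_i,N]-[P_i,M]\ge 0$, and evaluating at $\Lambda$ shows that these non-negative quantities sum to $\ell(N)-\ell(M)=0$, so all of them vanish; hence $\psi$ is surjective. The short exact sequence $0\to K\to A\to B\to 0$ then yields $A\le{deg}K\oplus B$ via the Riedtmann sequence you wrote down, and combining with the isomorphism $K\oplus B\oplus M\cong A\oplus N$ gives $A\oplus M\le{deg}A\oplus N$, i.e.\ $M\le{vdeg}N$. You are right to flag Auslander's lemma (that the numbers $[W,-]$ for all $W$ determine a module up to isomorphism) as the one nontrivial external input.
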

 
 If $\delta$ is such a functor, we say that the degeneration is \emph{given by} $\delta$.

In section \ref{DegSub} we will prove the following:

\begin{restatable}{thrm}{substhrm}\label{subs}
Let $M$ and $N$ be $\Lambda$-modules and let $M'\subseteq M$ be a submodule. 
\begin{enumerate}
\item\label{subdeg} If $M\le{deg}N$, then there exists a submodule $N'\subseteq N$ such that $M'\le{deg} N'$.
\item\label{subvdeg} If $M\le{vdeg}N$, then there exists a submodule $N'\subseteq N$ such that $M'\le{vdeg} N'$.
\end{enumerate}

\end{restatable}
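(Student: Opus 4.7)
\textbf{Part~(\ref{subdeg}).} I would use Theorem~\ref{RZ}(3) to pick a Riedtmann sequence $0 \to X \xrightarrow{\binom{\alpha}{\beta}} M \oplus X \xrightarrow{\pi} N \to 0$, with $\alpha\colon X \to M$ and $\beta\colon X \to X$. The plan is to find a submodule $X' \subseteq X$ satisfying $\alpha(X') \subseteq M'$ and $\beta(X') \subseteq X'$; for such an $X'$, the restriction of $\binom{\alpha}{\beta}$ is a monomorphism $X' \hookrightarrow M' \oplus X'$ and $N' := \pi(M' \oplus X') \subseteq N$ will fit into an exact sequence $0 \to X' \to M' \oplus X' \to N' \to 0$, which is a Riedtmann sequence witnessing $M' \le{deg} N'$. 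To build $X'$, I would take the stable value of the descending chain $X_0 = X$, $X_{n+1} := \alpha^{-1}(M') \cap \beta^{-1}(X_n)$ (which stabilises by finite-dimensionality), and observe that the resulting fixed-point equation $X' = \alpha^{-1}(M') \cap \beta^{-1}(X')$ is exactly what is needed to identify $\ker(\pi|_{M' \oplus X'})$ with $\binom{\alpha}{\beta}(X')$: an element $(\alpha(y), \beta(y)) \in \im\binom{\alpha}{\beta}$ lies in $M' \oplus X'$ only if $\alpha(y) \in M'$ and $\beta(y) \in X'$, which forces $y \in X'$.

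\textbf{Part~(\ref{subvdeg}).} Given $Y$ with $Y \oplus M \le{deg} Y \oplus N$, applying Part~(\ref{subdeg}) to this degeneration with submodule $Y \oplus M' \subseteq Y \oplus M$ produces $\widetilde N \subseteq Y \oplus N$ with $Y \oplus M' \le{deg} \widetilde N$. The plan is then to apply Part~(\ref{subdeg}) also with the smaller submodule $Y \subseteq Y \oplus M$, producing $\widetilde Y \subseteq Y \oplus N$ with $Y \le{deg} \widetilde Y$; since the stable submodule of $X$ from the construction in Part~(\ref{subdeg}) is monotone in the chosen submodule of the source, $\widetilde Y$ will embed into $\widetilde N$. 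Taking the quotient of the two resulting Riedtmann sequences (whose left-hand terms are nested) then gives a new Riedtmann sequence showing $M' \le{deg} \widetilde N/\widetilde Y$, with $\widetilde N/\widetilde Y$ a subquotient of $Y \oplus N$.

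The hard part will be to descend from this subquotient-level degeneration to a virtual degeneration of $M'$ into an honest submodule of $N$. I expect to combine the above with the analogous quotient construction applied to the whole Riedtmann sequence (which yields $M \le{deg} (Y \oplus N)/\widetilde Y$) and to use the cancellation flexibility of $\le{vdeg}$ — for instance via the functorial characterisation recalled in the excerpted proposition — in order to locate a suitable $N' \subseteq N$ and produce a witness for $M' \le{vdeg} N'$.
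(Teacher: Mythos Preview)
Your Part~(\ref{subdeg}) is correct and is essentially the paper's argument. Your stable submodule $X'$ coincides with the paper's $\{x\in X\mid gf^n(x)\in M'\ \forall n\ge 0\}$ (in the paper's notation $f$ is your $\beta$ and $g$ your $\alpha$), and your direct identification of $\ker(\pi|_{M'\oplus X'})$ with $\smatr{\alpha\\\beta}(X')$ replaces the paper's Snake Lemma step. One cosmetic point: the chain stabilises because the modules have finite \emph{length} (we are over an artin algebra), not necessarily finite dimension.

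Your Part~(\ref{subvdeg}) has a genuine gap. The construction you outline does yield a Riedtmann sequence showing $M'\le{deg}\widetilde N/\widetilde Y$, but $\widetilde N/\widetilde Y$ is only a subquotient of $Y\oplus N$, and you do not explain how to pass from there to a submodule of $N$. The sentences ``I expect to combine\ldots'' and ``in order to locate a suitable $N'\subseteq N$'' are not a proof; in particular, the functorial description of $\le{vdeg}$ gives no obvious mechanism for turning a degeneration into a subquotient into a virtual degeneration into a submodule, and I do not see how your outline could be completed without a new idea.

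The paper takes a different route. It first proves the elementary Lemma~\ref{subsum}: any submodule of $X\oplus Y$ degenerates to $X'\oplus Y'$ for some $X'\subseteq X$, $Y'\subseteq Y$. Then, starting from $M'\oplus Y\subseteq M\oplus Y\le{deg}N\oplus Y$, one alternates Part~(\ref{subdeg}) and this lemma to produce descending chains $Y=Y_1\supseteq Y_2\supseteq\cdots$ and $N=N_1\supseteq N_2\supseteq\cdots$ with $M'\oplus Y_i\le{deg}N_{i+1}\oplus Y_{i+1}$. Since $Y$ is artinian the $Y$-chain stabilises at some $Y_j=Y_{j-1}$, giving $M'\oplus Y_j\le{deg}N_j\oplus Y_j$ and hence $M'\le{vdeg}N_j$ with $N_j\subseteq N$. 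The key idea you are missing is precisely this ``split off the $Y$-part and iterate'' manoeuvre.
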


In section \ref{trianglereps} we look at representations whose images are contained in $\U_d(k)$, which we call \emph{triangular representations}. We show that these can be viewed as representations of composition series, and then we prove the following analogue of Theorem \ref{RZ}.

\begin{restatable}{thrm}{trianglethrm}\label{RZ-triangle}
Let $\mu$ and $\nu$ be triangular $\Lambda$-representations, and let respectively $\xymatrix{M_1\ar@{^{(}->}[r]^{i_1}&\ldots\ar@{^{(}->}[r]^{i_{d-1}}&M_d}$ and  $\xymatrix{N_1\ar@{^{(}->}[r]^{j_1}&\ldots\ar@{^{(}->}[r]^{j_{d-1}}&N_d}$ be the corresponding composition series. Then $\nu\in\overline{U_d(k)*\mu}$ if and only if there exists a commutative diagram $$\xymatrix{0\ar[d]&0\ar[d]&&0\ar[d]\\
X_1\ar[r]^{h_1}\ar[d]&X_2\ar[r]^{h_2}\ar[d]&\cdots\ar[r]^{h_{d-1}}&X_d\ar[d]\\
X_1\oplus M_1\ar[r]^{\smatr{h_1&0\\0&i_1}}\ar[d]&X_2\oplus M_2\ar[r]^{\smatr{h_2&0\\0&i_2}}\ar[d]&\cdots\ar[r]^{\smatr{h_{d-1}&0\\0&i_{d-1}}}&X_d\oplus M_d\ar[d]\\
N_1\ar[d]\ar[r]^{j_1}&N_2\ar[d]\ar[r]^{j_2}&\cdots\ar[r]^{j_{d-1}}&N_d\ar[d]\\
0&0&&0}$$ with exact columns.
\end{restatable}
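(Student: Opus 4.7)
The plan is to prove Theorem~\ref{RZ-triangle} as a filtered analogue of the Riedtmann-Zwara theorem (Theorem~\ref{RZ}). The starting observation is that an upper-triangular representation $\mu$ carries the canonical flag $M_i=\langle e_1,\ldots,e_i\rangle$ of $\Lambda$-submodules of $k^d$, and that $U_d(k)$ acts by flag-preserving change of basis. Thus $U_d(k)$-orbits on triangular representations are in bijection with isomorphism classes of pairs (module, composition series), and the commutative diagram in the statement should be read as a short exact sequence of $A_d$-quiver representations in $\md\Lambda$ of the form $0\to X_\bullet\to X_\bullet\oplus M_\bullet\to N_\bullet\to 0$ --- the natural Riedtmann sequence for composition series.

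For the implication $(\Leftarrow)$, I would use the diagram to build a one-parameter family $g(t)\in U_d(k(t))$ with $g(t)\cdot\mu$ extending at $t=0$ to $\nu$, by induction on $d$. The case $d=1$ is trivial (both $M_1$ and $N_1$ are simple, so the exactness of the first column forces them to be isomorphic). For the inductive step, truncate the diagram to its first $d-1$ columns and invoke induction to produce a family $g^{(d-1)}(t)\in U_{d-1}(k(t))$ realizing $\mu^{(d-1)}\mapsto\nu^{(d-1)}$, then extend it by a new $d$-th row and column. The Riedtmann sequence $0\to X_d\to X_d\oplus M_d\to N_d\to 0$ in the rightmost column controls how the new basis vector of $\mu$ is carried to its image in $\nu$, and the commutativity of the last square (involving $h_{d-1}$, $i_{d-1}$, $j_{d-1}$) ensures that the extension can be chosen upper-triangular without disturbing the previously constructed blocks.

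For the implication $(\Rightarrow)$, the valuative criterion supplies an upper-triangular family $g(t)\in U_d(k(t))$ with $g(t)\cdot\mu$ specializing to $\nu$ at $t=0$. The crucial computational point is that conjugation by upper-triangular matrices restricts cleanly to top-left blocks: the truncation $g^{(i)}(t)\in U_i(k(t))\subseteq\GL_i(k(t))$ realizes a $\GL_i$-degeneration $\mu^{(i)}\to\nu^{(i)}$, so $M_i\le{deg}N_i$. Applying Zwara's construction at each level produces a module $X_i$ and a Riedtmann sequence $0\to X_i\to X_i\oplus M_i\to N_i\to 0$. Because the families $g^{(i)}(t)$ are all blocks of the single family $g(t)$, the associated $\Lambda\otimes k[t]_{(t)}$-lattices $\widetilde M_1,\ldots,\widetilde M_d$ form a chain of $R$-flat submodules, and Zwara's construction applied compatibly to this chain yields the required $\Lambda$-linear maps $h_i\colon X_i\to X_{i+1}$ making the diagram commute.

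The principal obstacle is this compatibility: Zwara's module $X$ is not canonically associated to a degeneration (it depends on the choice of one-parameter family and on certain truncation parameters), so producing the $X_i$ together with the $h_i$'s requires running the construction uniformly in $i$ rather than column by column. I expect the cleanest route is to reformulate the entire Riedtmann-Zwara argument in a relative setting over the DVR $R=k[t]_{(t)}$, taking as input the chain of $R$-flat $\Lambda\otimes R$-modules $\widetilde M_1\hookrightarrow\cdots\hookrightarrow\widetilde M_d$ and outputting the full commutative diagram in a single step; the diagram commutativity then follows from functoriality of the construction on the chain.
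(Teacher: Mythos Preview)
Your overall framing is right---the diagram is a Riedtmann sequence in $\lrep\A_d\simeq\md\U_d(\Lambda)$---but you stop short of exploiting this, and both directions end up harder than necessary.

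For $(\Rightarrow)$ the paper bypasses your ``principal obstacle'' entirely. Instead of running Zwara's construction levelwise and then fighting for compatibility among the $X_i$, it builds an explicit morphism of varieties $\psi\colon T_d(\Lambda)\to\md_a\U_d(\Lambda)$ (with $a=\tfrac{d(d+1)}{2}$) that carries $U_d(k)$-orbits into $\GL_a(k)$-orbits. Then $\nu\in\overline{U_d(k)*\mu}$ forces $\psi(\nu)\in\overline{\GL_a(k)*\psi(\mu)}$, and a \emph{single} application of Theorem~\ref{RZ} over the algebra $\U_d(\Lambda)$ yields one Riedtmann sequence in $\md\U_d(\Lambda)\simeq\lrep\A_d$, which \emph{is} the desired commutative diagram. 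The compatibility of the $X_i$, the maps $h_i$, and the block-diagonal form $\smatr{h_i&0\\0&i_i}$ of the middle row all come for free. Your relative-DVR plan is plausible and could probably be made to work, but it is a genuine detour, and you would still owe a verification that the middle maps come out block-diagonal rather than merely filtered.

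For $(\Leftarrow)$ the paper does not induct on $d$. After first arranging that every $h_r$ is injective (replacing the offending $X_r$ by $\im h_r$, largest $r$ first), it runs Riedtmann's one-parameter deformation $\phi_t=\smatr{f_d+t\cdot 1_{X_d}\\g_d}$ once on the last column, choosing a basis $\{\mathbf{b}_1,\ldots,\mathbf{b}_d\}$ for a complement of $\im\smatr{f_d\\g_d}$ in $X_d\oplus M_d$ with $\mathbf{b}_i\in X_i\oplus M_i$ for each $i$. That single filtered basis choice is what forces the entire family $\nu^t$ to be triangular and to lie in $U_d(k)*\mu$ for $t\ne 0$. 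Your inductive sketch has a real gap at the extension step: the family $g^{(d-1)}(t)$ handed to you by a black-box induction hypothesis is not canonical, and the commutativity of the last square constrains the modules $X_{d-1},X_d,M_{d-1},M_d$, not the particular one-parameter family you already committed to on the first $d-1$ levels. Nothing you have written explains why an \emph{arbitrary} such family extends to one realizing the degeneration at level $d$. To repair this you would have to carry the explicit Riedtmann construction (and basis choice) through the induction hypothesis---at which point you are reassembling the paper's global argument column by column.
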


To study degenerations of modules, one can look at the variety of quiver representations, $\rep_{\mathbf{d}}(Q,\rho)$, instead of $\md_d\Lambda$. Let $Q$ be a quiver with vertices $Q_0=\{1,\ldots,n\}$ and arrows $Q_1$,  and let $\mathbf{d}=(d_1,\ldots,d_n)\in \N^n$. Then $\rep_{\mathbf{d}}Q=\coprod_{\alpha\in Q_1} \M_{d_{e(\alpha)}\times d_{s(\alpha)}}(k)$, where $s(\alpha)$ and $e(\alpha)$ are respectively the start and end points of the arrow $\alpha$, consists of all representations with dimension vector $\mathbf{d}$. The group variety $G_{\mathbf{d}}=\GL_{d_1}(k)\times\ldots\times \GL_{d_n}(k)$ acts on $\rep_{\mathbf{d}} Q$, and the orbits correspond to isomorphism classes. Given a set of relations $\rho$ on $Q$, $\rep_{\mathbf{d}}(Q,\rho)$ is the subvariety of $\rep_{\mathbf{d}} Q$ consisting of all representations that satisfy the relations in $\rho$. K. Bongartz showed in \cite{Bongartz} that the degeneration order we get from $\rep_{\mathbf{d}}(Q,\rho)$ is the same as the one we get from  $\md_d kQ/\langle\rho\rangle$. He also showed a deeper geometric connection between these varieties, but we will not go into that in this paper. Usually $\rep_{\mathbf{d}}(Q,\rho)$ is much smaller than  $\md_d kQ/\langle\rho\rangle$, which makes it easier to perform computations.

 In section \ref{trianglequiver} we introduce a similar smaller variety that can be used to study degenerations of composition series.

For general background on representation theory of algebras we refer the reader to \cite{ARS}. For an introduction to the topic of module degenerations, see \cite{Smalo}.

\section{Degenerations of submodules}\label{DegSub} 

In this section, let $k$ be a commutative artin ring and let $\Lambda$ be an artin $k$-algebra. All modules considered in this paper have finite length.

We first prove part \ref{subdeg} of  Theorem \ref{subs}.
 
\begin{prop}\label{subdegprop}
Let $M$ and $N$ be $\Lambda$-modules and let $M'\subseteq M$ be a submodule. If $M\le{deg}N$, then there exists a submodule $N'\subseteq N$ such that $M'\le{deg} N'$.
\end{prop}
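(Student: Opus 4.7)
The plan is to construct a Riedtmann sequence of the form $0 \to X' \to M' \oplus X' \to N' \to 0$ by restricting a Riedtmann sequence for $M \le{deg} N$. So by Theorem \ref{RZ}, fix a short exact sequence
$$0 \to X \xrightarrow{\smatr{a \\ b}} M \oplus X \xrightarrow{(c, d)} N \to 0$$
in $\md\Lambda$. The naive choice $X' = a^{-1}(M')$ ensures $a(X') \subseteq M'$, but is generally not $b$-stable, so the restriction $\smatr{a \\ b}|_{X'}$ lands in $M' \oplus X$ rather than $M' \oplus X'$. The remedy is to shrink $X'$ to the largest $b$-invariant submodule of $X$ inside $a^{-1}(M')$. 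Concretely, I would define
$$X' := \bigcap_{i \ge 0} b^{-i}(a^{-1}(M')) = \{x \in X : a(b^i x) \in M' \text{ for all } i \ge 0\};$$
this descending intersection stabilizes in finitely many steps because $X$ has finite length.

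By construction $a(X') \subseteq M'$ and $b(X') \subseteq X'$, so $\smatr{a \\ b}|_{X'}$ is a well-defined injection $X' \to M' \oplus X'$ (injectivity inherited from $\smatr{a \\ b}$). Letting $N'$ denote its cokernel yields a short exact sequence $0 \to X' \to M' \oplus X' \to N' \to 0$, which by Theorem \ref{RZ} already gives $M' \le{deg} N'$. To finish, I would realize $N'$ as a submodule of $N$: the composition $M' \oplus X' \hookrightarrow M \oplus X \xrightarrow{(c, d)} N$ annihilates $\im \smatr{a \\ b}|_{X'}$, hence factors through $N'$, and injectivity of the induced map $N' \to N$ amounts to the identity $\im \smatr{a \\ b} \cap (M' \oplus X') = \im \smatr{a \\ b}|_{X'}$. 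The inclusion $\supseteq$ is immediate, while for $\subseteq$, given $y \in X$ with $a(y) \in M'$ and $b(y) \in X'$, the $b$-stability of $X'$ forces $b^{i-1}(by) \in X'$ for every $i \ge 1$, hence $a(b^i y) \in M'$; combining with the case $i = 0$ places $y$ in $X'$.

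The main obstacle is pinpointing $X'$: the two conditions $a(X') \subseteq M'$ and $b(X') \subseteq X'$ must be arranged simultaneously, and since $X'$ has to sit inside $X$, one cannot enlarge $X$ to help. The artinian hypothesis is precisely what lets one form the largest $b$-invariant submodule of $X$ contained in $a^{-1}(M')$; once $X'$ is in hand, everything else reduces to a direct diagram chase.
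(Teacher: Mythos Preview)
Your argument is correct and is essentially the same as the paper's proof: with the relabelling $a\leftrightarrow g$, $b\leftrightarrow f$ your $X'=\bigcap_{i\ge 0}b^{-i}(a^{-1}(M'))$ coincides with the paper's $\{x\in X\mid gf^n(x)\in M'\ \forall n\ge 0\}$, and your direct verification that $\im\smatr{a\\b}\cap(M'\oplus X')=\im\smatr{a\\b}|_{X'}$ is exactly the element chase the paper packages via the Snake Lemma. (Minor note: the artinian hypothesis is not actually needed to form $X'$---the intersection of submodules is a submodule regardless of whether the chain stabilizes.)
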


\begin{proof}

Assume that $M\le{deg}N$ and let $M'\subseteq M$ be a submodule. Then there exists an exact sequence
$$\eta:\xymatrix{0\ar[r]&X\ar[r]^{\smatr{f\\g}}&X\oplus M\ar[r]&N\ar[r]&0}.$$
Let $X'=\{x\in X\mid gf^n(x)\in M' \quad \forall n\ge 0\}$, let $i_X:X'\to X$ and $i_M:M'\to M$ be the submodule inclusions. From the definition of $X'$, we see that $f(X')\subseteq X'$ and $g(X')\subseteq M'$. Thus, by restricting $\smatr{f\\g}$ to $X'$, we get a homomorphism $\left.\smatr{f\\g}\right|_{X'}^{X'\oplus M'}:X'\to X'\oplus M'$. Let $N'=\coker \left.\smatr{f\\g}\right|_{X'}^{X'\oplus M'}$. We then have the commutative diagram 
$$\xymatrix{&0\ar[d]&0\ar[d]&&
\\0\ar[r]&X'\ar[r]^{\left.\smatr{f\\g}\right|_{X'}^{X'\oplus M'}}\ar[d]^{i_X}&X'\oplus M' \ar[r]\ar[d]^{\smatr{i_X&0\\0&i_M}}&N'\ar[d]^\alpha\ar[r]&0
\\0\ar[r]&X\ar[r]^{\smatr{f\\g}}\ar[d]&X\oplus M\ar[r]\ar[d]&N\ar[r]&0
\\&X/X'\ar[r]^{\smatr{\overline{f}\\\overline{g}}}\ar[d]&X/X'\oplus M/M'\ar[d]&&
\\&0&0&&}$$
with exact rows and columns. Since the top row is exact we have $M'\le{deg}N'$, so it remains to show that $\alpha$ is a monomorphism. We have $$\ker \overline{f}=\{(x+X')\in X/X'\mid f(x)\in X'\}$$$$=\{(x+X')\in X/X'\mid gf^n(x)\in M' \,\forall n\ge 1\}.$$ If $(x+X')$ is a non-zero element in $\ker \overline{f}$ then $x\not\in X'=\{x\in X\mid gf^n(x)\in M' \forall n\ge 0\}$, so we must have $g(x)\not\in M'$ and hence $(x+X')\not\in\ker\overline{g}$. This means that $\ker\smatr{\overline{f}\\\overline{g}}=\ker\overline{f}\cap\ker\overline{g}=(0)$. Then by the Snake Lemma we get that $\ker\alpha=(0)$.
\end{proof}

To prove the same result for virtual degenerations, we will need the following simple lemma.

\begin{lemma}\label{subsum}
Let $X$ and $Y$ be $\Lambda$-modules, and let $M\subseteq X\oplus Y$ be a submodule. Then there exist submodules $X'\subseteq X$ and $Y'\subseteq Y$ such that $M\le{deg} X'\oplus Y'$.
\end{lemma}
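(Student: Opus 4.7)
The natural candidates are $X' = \pi_X(M)$ (the image of $M$ under the projection $X \oplus Y \to X$) and $Y' = M \cap Y$ (the kernel of this projection restricted to $M$). These are honestly submodules of $X$ and $Y$, and by construction they fit into a short exact sequence
$$0 \to Y' \xrightarrow{j} M \xrightarrow{q} X' \to 0,$$
where $j$ is the inclusion $Y' \hookrightarrow M$ coming from $Y' \subseteq M$ and $q$ is the restriction to $M$ of the projection onto $X$.

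The plan is to show that $M$ degenerates to the split version $X' \oplus Y'$ of this extension, which by (the artin-ring version of) Theorem \ref{RZ} amounts to constructing a single Riedtmann sequence of the form $0 \to Z \to M \oplus Z \to X' \oplus Y' \to 0$. I would take $Z = Y'$ and consider
$$0 \to Y' \xrightarrow{\smatr{j\\0}} M \oplus Y' \xrightarrow{\smatr{q&0\\0&\mathrm{id}}} X' \oplus Y' \to 0.$$
Exactness is immediate: the composition is zero because $qj = 0$; surjectivity of the right map follows from surjectivity of $q$; and the kernel of the right map is $\ker q \oplus 0 = j(Y') \oplus 0$, which is exactly the image of the left map. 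Hence $M \le{deg} X' \oplus Y'$.

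There is essentially no main obstacle; the only thing to keep in mind is that the section works over a commutative artin ring $k$, so one has to invoke the criterion by Riedtmann-sequences (which is the definition of $\le{deg}$ in this generality) rather than a geometric orbit-closure argument. The proof amounts to writing down the above 2-by-2 block map and checking exactness.
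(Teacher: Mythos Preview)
Your proof is correct and essentially identical to the paper's: both take $X'=\im(pi)$ and $Y'=\ker(pi)=M\cap Y$ for the projection $p:X\oplus Y\to X$ and the inclusion $i:M\hookrightarrow X\oplus Y$, obtain the short exact sequence $0\to Y'\to M\to X'\to 0$, and then use the standard fact that the middle term of an exact sequence degenerates to the sum of the ends via the Riedtmann sequence $0\to Y'\to M\oplus Y'\to X'\oplus Y'\to 0$. The only cosmetic difference is that you spell out the maps $\smatr{j\\0}$ and $\smatr{q&0\\0&\mathrm{id}}$ explicitly, which the paper leaves implicit.
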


\begin{proof}
Let $i:M\to X\oplus Y$ be the inclusion and $p:X\oplus Y\to X$ the projection on the first summand. We have a commutative diagram
$$\xymatrix{0\ar[r]&Y\ar[r]&X\oplus Y\ar[r]^p&X\ar[r]&0
\\0\ar[r]&\ker pi\ar@{^{(}->}[u]\ar[r]&M\ar@{^{(}->}[u]_i\ar[r]&\im pi\ar@{^{(}->}[u]\ar[r]&0}$$

 with exact rows. From the bottom row we make an exact sequence $$0\to\ker pi \to \ker pi \oplus M\to \ker pi\oplus \im pi\to 0,$$  which shows that $M\le{deg}\im pi\oplus \ker pi$.
\end{proof}
 
We can now complete the proof of  Theorem \ref{subs}.

\substhrm*
 
\begin{proof}
Part \ref{subdeg} was proved in Proposition \ref{subdegprop}, so it remains to prove part \ref{subvdeg}.

Assume that $M\le{vdeg}N$. Then there exists some $Y\in\md\Lambda$ so that $M\oplus Y\le{deg}N\oplus Y$. We have a submodule $M'\subseteq M$, and we want to find submodules $N'\subseteq N$ and $Y'\subseteq Y$ such that $M'\oplus Y'\le{deg}N'\oplus Y'$. To do so we construct two descending chains of submodules $Y=Y_1\supseteq Y_2\supseteq\ldots$ and $N=N_1\supseteq N_2\supseteq\ldots$, where $M'\oplus Y_i\le{deg} N_{i+1}\oplus Y_{i+1}$ for all $i$.

We have that $M'\oplus Y\subseteq M\oplus Y$, so by Proposition \ref{subdegprop}, there exists a submodule $Z_1\subseteq N\oplus Y$ such that $M'\oplus Y\le{deg}Z_1$. Then by Lemma \ref{subsum}, there exist submodules $N_2\subseteq N$ and $Y_2\subseteq Y$ such that $Z_1\le{deg}N_2\oplus Y_2$, so we have $M'\oplus Y_1\le{deg}N_2\oplus Y_2$.

For $i>1$, assume that we have $M'\oplus Y_{i-1}\le{deg}N_i\oplus Y_i$ and $Y_i\subseteq Y_{i-1}$. Then $M'\oplus Y_i\subseteq M'\oplus Y_{i-1}$, and we can again apply Proposition \ref{subdegprop} and  Lemma \ref{subsum} to find $N_{i+1}\subseteq N_i$ and $Y_{i+1}\subseteq Y_i$ such that $M'\oplus Y_i\le{deg}N_{i+1}\oplus Y_{i+1}$.

  Since $Y$ is artin there is some $j$ such that $Y_j=Y_{j-1}$, so we have $M'\oplus Y_j\le{deg}N_j\oplus Y_j$ and thus $M'\le{vdeg}N_j$.

\end{proof}

For a module $M$, let $\Sub M$ denote the set of submodules of $M$. The construction in the proof of Proposition \ref{subdegprop} induces a function $\phi_\eta:\Sub M\to\Sub N$. Note that if $\theta$ is a different Riedtmann-sequence for the same degeneration, the functions $\phi_\eta$ and $\phi_{\theta}$ may be different. There are several questions that are natural to ask here, for example
\begin{itemize}
\item Is $\phi_\eta$ surjective?
\item Is it injective?
\item Is the codimension of $M'\le{deg}N'$ bounded by the codimension of $M\le{deg}N$?
\item If $M\le{deg}N$ is given by a finitely presented functor $\delta$, is $M'\le{deg}N'$ given by a subfunctor of $\delta$?

\end{itemize}

As the following examples show, the answer to each of these questions is in general no.

\begin{ex}\label{Kron}

Let $k$ be a field, $Q$ the Kronecker quiver, 
$$Q:\xymatrix{1 \ar@<1ex>[r]^{\alpha} \ar[r]_{\beta}& 2},$$
 and consider the path algebra $kQ$ and the $kQ$-modules given by the quiver representations 
 $$I_2=\xymatrix{k^2 \ar@<1ex>[r]^{\smatr{1&0}} \ar[r]_{\smatr{0&1}}& k}, \qquad S_1=\xymatrix{k \ar@<1ex>[r]^{\smatr{0}} \ar[r]_{\smatr{0}}& 0},$$
   $$S_2=\xymatrix{0 \ar@<1ex>[r]^{\smatr{0}} \ar[r]_{\smatr{0}}& k},\qquad R=\xymatrix{k \ar@<1ex>[r]^{\smatr{1}} \ar[r]_{\smatr{0}}& k}$$
   $$D\Tr S_1 =\xymatrix{k^3 \ar@<1ex>[r]^{\smatr{1&0&0\\0&1&0}} \ar[r]_{\smatr{0&1&0\\0&0&1}}& k^2}.$$

  We have a degeneration $I_2\le{deg}R\oplus S_1$ given by a Riedtmann-sequence $$\eta:\xymatrix{0\ar[r]&R\ar[r]&R\oplus I_2\ar[r]&R\oplus S_1\ar[r]&0}.$$ Any $(1,1)$-dimensional regular module $R'$ is isomorphic to a submodule of $I_2$, but when $R'\not\simeq R$ the only submodule of $R\oplus S_1$ it can degenerate to is the socle. Thus we see that $\phi_\eta$ is not injective. On the other hand, there is a $k$-family of submodules of $R\oplus S_1$ that are isomorphic to $R$. But there is only one submodule of $I_2$ that can degenerate to any of these, so $\phi_\eta$ is not surjective either. 

Note also that we have $[D\Tr S_1,R\oplus S_1]-[D\Tr S_1, I_2]=1\leq[D\Tr S_1,S_1\oplus S_2]-[D\Tr S_1,R']=3$, so if $R'\le{deg}S_1\oplus S_2$ is given by a functor $\delta$, then $\delta$ can not be a subfunctor of any functor giving the degeneration $I_2\le{deg}R\oplus S_1$. 
\end{ex}

In the above example the codimension of the degeneration decreases when we go to the submodules, that is, for modules $M\le{deg}N$ and submodules $M'\le{deg}N'$ we have $\codim(M',N')\leq\codim(M,N)$. As the next example shows, this does not hold in general.

\begin{ex}
Let k be a field and $\Lambda=k[X]/(X^2)$, let $S$ be the simple $\Lambda$-module and let $p:\Lambda\twoheadrightarrow  S$ and $i:S\hookrightarrow \Lambda$ be the natural projection and inclusion.   From the Riedtmann-sequence
$$\eta:\xymatrix{0\ar[r]&S\ar[r]^{\smatr{0\\i\\0}}&S\oplus\Lambda^2\ar[r]^{\smatr{0&0&1\\0&p&0\\1&0&0}}&\Lambda\oplus S^2\ar[r]&0}$$ we see that $\Lambda^2\le{deg}\Lambda\oplus S^2$.
Let $M\subseteq \Lambda^2$ be the image of $\xymatrix{\Lambda\oplus S\ar[r]^{\smatr{1&0\\0&i}}&\Lambda^2}.$ Then $\phi_\eta(M)\simeq S^3$, and $\codim(\Lambda^2,\Lambda\oplus S^2)=2$, while  \mbox{$\codim(M,S^3)=4$.} However, for the Riedtmann-sequence 
$$\theta:\xymatrix{0\ar[r]&S\ar[r]^{\smatr{0\\0\\i}}&S\oplus\Lambda^2\ar[r]^{\smatr{0&0&p\\0&1&0\\1&0&0}}&\Lambda\oplus S^2\ar[r]&0}$$
we get $\phi_\theta(M)\simeq\Lambda\oplus S$, and then  $\codim(M,\phi_\theta(M))=0$.

\end{ex}

Applying Theorem \ref{subs} repeatedly we get a connection between the composition series of a module and the composition series of its degenerations.

\begin{cor}\label{compcor}
Let $M$ and $N$ be $\Lambda$-modules such that $M\le{deg}N$ ($M\le{vdeg}N$), and let $(0)=M_0\subseteq M_1\subseteq \cdots \subseteq M_d=M$ be a composition series of $M$. Then there is a composition series $(0)=N_0\subseteq N_1\subseteq \cdots \subseteq N_d=N$ of $N$ such that for $1\leq i\leq d$ we have $M_i\le{deg}N_i$ ($M_i\le{vdeg}N_i$). In particular,  $M_i/M_{i-1}\simeq N_i/N_{i-1}$.
\end{cor}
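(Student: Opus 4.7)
The plan is a descending induction on $i$, using Theorem~\ref{subs} as the main tool. I would set $N_d := N$, and assuming I have constructed $N_i \subseteq N$ with $M_i \le{deg} N_i$ (respectively $M_i \le{vdeg} N_i$), apply Theorem~\ref{subs} to this degeneration together with the submodule $M_{i-1} \subseteq M_i$ to obtain a submodule $N_{i-1} \subseteq N_i$ with $M_{i-1} \le{deg} N_{i-1}$ (respectively $M_{i-1} \le{vdeg} N_{i-1}$). Iterating this step downward from $i = d$ to $i = 0$ produces the desired chain $N_0 \subseteq N_1 \subseteq \cdots \subseteq N_d = N$ of submodules of $N$ with $M_i \le{deg} N_i$ (respectively $\le{vdeg}$) at every step.

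To verify that this chain is actually a composition series of $N$, I would use the fact that (virtual) degeneration preserves length: from a Riedtmann-sequence $0 \to X \to M \oplus X \to N \to 0$ one reads off $\ell(M) = \ell(N)$ immediately, and the virtual case reduces to this by cancelling the common summand $Z$ in $M \oplus Z \le{deg} N \oplus Z$. Consequently $\ell(N_i) = \ell(M_i) = i$ for every $i$; in particular $N_0 = 0$ and each quotient $N_i/N_{i-1}$ has length $1$, hence is simple.

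For the final assertion $M_i/M_{i-1} \simeq N_i/N_{i-1}$ I would rely on the stronger fact that (virtual) degeneration preserves the entire multiset of composition factors, which is another direct consequence of Riedtmann-sequences combined with Jordan--H\"older (composition factors of $M \oplus X$ equal those of $N \oplus X$, and one cancels those of $X$; the virtual case cancels $Z$ as above). Applied to both $M_i \le{deg} N_i$ and $M_{i-1} \le{deg} N_{i-1}$ and subtracting the resulting equalities, the unique composition factor of $M_i/M_{i-1}$ must coincide with that of $N_i/N_{i-1}$, forcing these simple modules to be isomorphic. The only subtlety I anticipate is the bookkeeping — arranging the induction in the descending direction so that the $N_i$ nest properly — but this is automatic from the fact that Theorem~\ref{subs} delivers $N_{i-1}$ as a submodule of the ambient $N_i$.
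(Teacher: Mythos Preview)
Your proposal is correct and follows the same approach as the paper: the paper simply states that the corollary is obtained by applying Theorem~\ref{subs} repeatedly, which is precisely your descending induction. You have supplied more detail than the paper does (the length and Jordan--H\"older arguments for the ``in particular'' clause), but the underlying strategy is identical.
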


So given a composition series $(0)\subseteq M_1\subseteq \cdots \subseteq M_d$ of $M$ and a Riedtmann-sequence of a degeneration $M\le{deg} N$, we get a composition series $(0)\subseteq N_1\subseteq \cdots \subseteq N_d$ of $N$ that seems to be some kind of degeneration of $(0)\subseteq M_1\subseteq \cdots \subseteq M_d$. If we are working over an algebraically closed field, it seems like there should be a variety of composition series where $(0)\subseteq N_1\subseteq \cdots \subseteq N_d$ is in the orbit closure of $(0)\subseteq M_1\subseteq \cdots \subseteq M_d$. In the next section we will describe such a variety.

\section{Triangular representations}\label{trianglereps}

In this section let $k$ be an algebraically closed field, and let $\Lambda$ be a basic finite-dimensional $k$-algebra. We are going to look at the following subvariety of $\md_d\Lambda$.

\begin{defn}
We call a representation $\rho\in\md_d\Lambda$ \emph{triangular} if $\im \rho\subseteq \U_d(k)$. We denote the set of all triangular representations in $\md_d\Lambda$ by $T_d(\Lambda)$.
\end{defn}

Given any subset of $\md_d\Lambda$, an obvious question to ask is which $d$-dimensional $\Lambda$-modules have representations in the subset. As we shall see,  all $d$-dimensional $\Lambda$-modules have representations in $T_d(\Lambda)$. 

Clearly $T_d(\Lambda)$ is a closed subset of $\md_d\Lambda$, so it is an affine variety. The group variety $U_d(k)$ acts on it by conjugation. In $\md_d\Lambda$, orbits correspond to isoclasses of modules, and orbit closures can be described using Riedtmann-sequences. We are going to give a similar description of orbits and orbit closures in $T_d(\Lambda)$.

We will first show how a  triangular representation can be viewed as a representation of a module and one of its composition series. Then we show that $U_d(k)$-orbits correspond 1-1 to isoclasses of composition series. Finally, we prove Theorem \ref{RZ-triangle}, which gives an algebraic description of the orbit closures, and shows that degeneration in $T_d(\Lambda)$ is the same as the degeneration of composition series suggested by Corollary~\ref{compcor}.

Given a triangular representation $\mu=\mu_d$ we obtain a composition series in the following way: Let $M_d$ be $k^d$ with the module structure obtained from $\mu$ in the usual way. For each $i$ let $M_i$ be the submodule generated by the unit vectors $\{\mathbf{u}_1,\ldots,\mathbf{u}_{i}\}$. Then we get a representation $\mu_i$ of $M_i$ simply by deleting the rightmost column and the bottom row of each of the matrices in $\mu_{i+1}$.

Given a composition series $(0)\subseteq M_1\subseteq\ldots\subseteq M_d$ we must choose a basis of $M_d$ in order to construct a representation. Choosing the basis $\{\mathbf{x}_1,\ldots,\mathbf{x}_d\}$ such that $\mathbf{x}_i\in M_i$ for all $i$, we get a representation that is triangular.

Since triangular representations represent composition series, and all modules have composition series, it follows that all modules have triangular representations.

We say that two composition series $(0)\subseteq M_1\subseteq\cdots\subseteq M_d$ and $(0)\subseteq N_1\subseteq\cdots\subseteq N_d$ are isomorphic if $M_i\simeq N_i$ for all $i$ and these isomorphisms commute with the submodule inclusions. In $\md_d \Lambda$ the isomorphism classes correspond to $\GL_d(k)$-orbits, and we want a similar correspondence for $T_d(\Lambda)$. We will now show that the orbits of $U_d(k)$ in $T_d(\Lambda)$ correspond to isomorphism classes of composition series. 

If $\mu$ and $\nu$ are triangular representations of $(0)\subseteq M_1\subseteq\ldots\subseteq M_d$ and $(0)\subseteq N_1\subseteq\ldots\subseteq N_d$, and $\nu=g*\mu$ for some $g\in U_d(k)$, then since $g\in \GL_d(k)$ we have an isomorphism between $M_d$ and $N_d$. Let $g_d=g$ and for $1\leq i<d$ let $g_i$ be the matrix obtained from $g_{i+1}$ by deleting the bottom row and rightmost column. Then for each $i$, $g_i$ gives us an isomorphism between $M_i$ and $N_i$, and the isomorphisms commute with the inclusions, so the two composition series are isomorphic. 

Conversely, let $\mu$ and $\nu$ be triangular representations where we have an isomorphism $f$ between the corresponding composition series
$$\xymatrix{M_1\ar@{^{(}->}[r]^{m_1}\ar[d]^{f_1}&M_2\ar@{^{(}->}[r]^{m_2}\ar[d]^{f_2}&\cdots\ar@{^{(}->}[r]^{m_{d-1}}&M_d\ar[d]^{f_d}\\N_1\ar@{^{(}->}[r]^{n_1}&N_2\ar@{^{(}->}[r]^{n_2}&\cdots\ar@{^{(}->}[r]^{n_{d-1}}&N_d}.$$
The matrices of $m_i$ and $n_i$ with respect to the standard bases of $k^i$ and $k^{i+1}$ are $$A(m_i)=A(n_i)=\matr{1&\cdots&0
\\\vdots&\ddots&\vdots\\0&\cdots&1\\0&\cdots&0}.$$ It is then easy to check that the matrix of $f_i$, $A(f_i)$, will be upper triangular for each $i$, and $\nu=A(f_d)*\mu$.

A composition series of a $d$-dimensional module can also be viewed as a ``representation'' of the quiver $$\A_d:\xymatrix{1\ar[r]&2\ar[r]&\cdots\ar[r]&d},$$ but with $\Lambda$-modules and homomorphisms instead of vector spaces and linear maps. That is, we have a category $\lrep \A_d$, where the objects are series of $d$ $\Lambda$-modules and $d-1$ $\Lambda$-homomorphisms $$\xymatrix{M_1\ar[r]^{m_1}&M_2\ar[r]^{m_2}&\cdots\ar[r]^{m_{d-1}}&M_d}$$ and morphisms are commutative diagrams 
$$\xymatrix{M_1\ar[r]^{m_1}\ar[d]^{f_1}&M_2\ar[r]^{m_2}\ar[d]^{f_2}&\cdots\ar[r]^{m_{d-1}}&M_d\ar[d]^{f_d}\\N_1\ar[r]^{n_1}&N_2\ar[r]^{n_2}&\cdots\ar[r]^{n_{d-1}}&N_d},$$ and the composition series are objects in this category.
Similarly to the case of ordinary representations of $\A_d$, we have an equivalence between $\lrep \A_d$ and $\md \U_d(\Lambda)$.

We can now consider degenerations in $T_d(\Lambda)$. Clearly $\nu\in \overline{U_d(k)*\mu}$ implies $\nu\in\overline{\GL_d(k)*\mu}$, but the converse does not hold.

\begin{ex}\label{3x3.1}
Let $\Lambda=k[X]/(X^3)$ and consider $\md_3 \Lambda$. Any representation  is completely determined by its value on $X$, so we identify $\md_d\Lambda$ with the set of nilpotent $3\times3$-matrices. Let $$\mu=\matr{0&0&1\\0&0&0\\0&0&0},\qquad \nu=\matr{0&1&0\\0&0&0\\0&0&0}.$$ 
In $\md_3\Lambda$, $\mu$ and $\nu$ are in the same orbit, but in $T_3(\Lambda)$ we have
 $$\mu\in\overline{U_3(k)*\nu}=\left\{\left.\matr{0&a&b\\0&0&0\\0&0&0}\right|a,b\in k\right\},$$
 but $$\nu\not\in\overline{U_3(k)*\mu}=\left\{\left.\matr{0&0&a\\0&0&0\\0&0&0}\right|a\in k.\right\}$$

 So as a triangular representation, $\mu$ is a proper degeneration of $\nu$, even though as ordinary representations they are isomorphic.

 Let $S$ be the simple $\Lambda$-module and $Y$ the 2-dimensional indecomposable $\Lambda$-module, and let $i$ denote the inclusion $S\hookrightarrow Y$. Both $\mu$ and $\nu$ represent $S\oplus Y$, and the corresponding composition series  are 
 $$\mu:\xymatrix{0\ar@{^{(}->}[r]&S\ar@{^{(}->}[r]^{\smatr{0\\1}}&S\oplus S\ar@{^{(}->}[r]^{\smatr{1&0\\0&i}}&S\oplus Y}$$
 $$\nu:\xymatrix{0\ar@{^{(}->}[r]&S\ar@{^{(}->}[r]^{i}&Y\ar@{^{(}->}[r]^{\smatr{0\\1}}&S\oplus Y}.$$
\end{ex}

In Example \ref{3x3.1}, we have a degeneration at each level of the composition series. That is a necessary condition for having a degeneration in $T_d(\Lambda)$, but as the next example shows, it is not sufficient. 

\begin{ex}\label{3x3.2}
Keep the notation from Example \ref{3x3.1}, and let
$$\nu'=\matr{0&0&0\\0&0&1\\0&0&0}.$$
This corresponds to the composition series $$\nu':\xymatrix{S\ar[r]^{\smatr{1\\0}}&S\oplus S\ar[r]^{\smatr{1&0\\0&i}}&S\oplus Y}.$$
Between $\mu$ and $\nu'$ we have isomorphisms at each level of the composition series, but the isomorphisms do not commute with the inclusions. Thus they are not isomorphic as composition series, and $\mu$ and $\nu$ are in different $G'$-orbits. As a triangular representation, $\mu$ is a proper degeneration of $\nu'$.
Despite $\nu_i'$ being a degeneration of $\mu_i$ for each $i$, $\nu'$ is not a degeneration of $\mu$ in $T_d(\Lambda)$.
\end{ex}

 In order to get a degeneration in $T_d(\Lambda)$ we somehow need the module degenerations to ``commute'' with the inclusions. More precisely, there must be Riedtmann-sequences for the module degenerations that form a commutative diagram with the composition series.

\trianglethrm*

For Example \ref{3x3.1}, we have this diagram (where $p$ is the projection $Y\twoheadrightarrow Y/S\simeq S$):
$$\xymatrix{&0\ar[d]&0\ar[d]&0\ar[d]\\
\chi:&S\ar[r]^1\ar[d]^{\smatr{0\\1}}&S\ar[r]^i\ar[d]^{\smatr{0\\i}}&Y\ar[d]^{\smatr{0\\-p\\1}}\\
\chi\oplus\nu:&S\oplus S\ar[r]^{\smatr{1&0\\0&i}}\ar[d]^{\smatr{1&0}}&S\oplus Y\ar[r]^{\smatr{i&0\\0&0\\0&1}}\ar[d]^{\smatr{0&p\\1&0}}&Y\oplus(S\oplus Y)\ar[d]^{\smatr{0&1&p\\1&0&0}}\\
\mu:&S\ar[r]^{\smatr{0\\1}}\ar[d]&S\oplus S\ar[r]^{\smatr{1&0\\0&i}}\ar[d]&S\oplus Y\ar[d]\\
&0&0&0}$$

And for Example \ref{3x3.2}, we have this diagram:
$$\xymatrix{&0\ar[d]&0\ar[d]&0\ar[d]\\
\chi':&0\ar[r]\ar[d]&S\ar[r]^1\ar[d]^{\smatr{0\\1\\-1}}&S\ar[d]^{\smatr{0\\1\\-i}}\\
\chi'\oplus \nu':&S\ar[r]^{\smatr{0\\1\\0}}\ar[d]^1&S\oplus (S\oplus S)\ar[r]^{\smatr{1&0&0\\0&1&0\\0&0&i}}\ar[d]^{\smatr{1&0&0\\0&1&1}}&S\oplus(S\oplus Y)\ar[d]^{\smatr{1&0&0\\0&i&1}}\\
\mu:&S\ar[r]^{\smatr{0\\1}}\ar[d]&S\oplus S\ar[r]^{\smatr{1&0\\0&i}}\ar[d]&S\oplus Y\ar[d]\\
&0&0&0}$$

We now come to the proof of Theorem \ref{RZ-triangle}.

\begin{proof}
We first assume that we have a commutative diagram 
$$\xymatrix{0\ar[d]&0\ar[d]&&0\ar[d]\\
X_1\ar[r]^{h_1}\ar[d]^{\smatr{f_1\\g_1}}&X_2\ar[r]^{h_2}\ar[d]^{\smatr{f_2\\g_2}}&\cdots\ar[r]^{h_{d-1}}&X_d\ar[d]^{\smatr{f_d\\g_d}}\\
X_1\oplus M_1\ar[r]_{\smatr{h_1&0\\0&i_1}}\ar[d]&X_2\oplus M_2\ar[r]_{\smatr{h_2&0\\0&i_2}}\ar[d]&\cdots\ar[r]_{\smatr{h_{d-1}&0\\0&i_{d-1}}}&X_d\oplus M_d\ar[d]\\
N_1\ar[d]\ar[r]^{j_1}&N_2\ar[d]\ar[r]^{j_2}&\cdots\ar[r]^{j_{d-1}}&N_d\ar[d]\\
0&0&&0}$$
with exact columns, and show that this implies that $\nu\in\overline{U_d(k)*\mu}$.

The maps $i_n$ and $j_n$ are monomorphisms for all $n$, and we start by showing that $h_n$ can also be assumed to be monic. 

Let $r$ be the highest number such that $h_r$ is not monic. Let $\pi:X_r\to \im h_r$ be the natural projection and $\iota:\im h_r\to X_{r+1}$ the natural injection. We make  a new commutative diagram by replacing the $r$th column with the image of the chain complex map $(h_r,\smatr{h_r&0\\0&i_r},j_r)$:
$$\xymatrix{&0\ar[d]&0\ar[d]&0\ar[d]&\\
\cdots\ar[r]&X_{r-1}\ar[r]^{\pi h_r}\ar[d]^{\smatr{f_{r-1}\\g_{r-1}}}&\im h_r\ar[r]^{\iota}\ar[d]^\alpha&X_{r+1}\ar[r]\ar[d]^{\smatr{f_{r+1}\\g_{r+1}}}&\cdots\\
\cdots\ar[r]&X_{r-1}\oplus M_{r-1}\ar[r]^{\smatr{\pi h_{r-1}&0\\0&i_{r-1}}}\ar[d]&\im h_r\oplus M_{r}\ar[r]^{\smatr{\iota&0\\0&i_{r}}}\ar[d]^\beta&X_{r+1}\oplus M_{r+1}\ar[r]\ar[d]&\cdots\\
\cdots\ar[r]&N_{r-1}\ar[d]\ar[r]^{j_{r-1}}&N_r\ar[r]^{j_r}\ar[d]&N_{r+1}\ar[r]\ar[d]&\cdots\\
&0&0&0&}$$

The new column is a  subcomplex of a short exact sequence, so $\alpha$ is a monomorphism, and it is also a quotient of a short exact sequence, so $\beta$ is an epimorphism. Since $\dim_k(\im h_r\oplus M_r)=\dim_k\im h_r +\dim_k N_r$ it is exact. By induction, we can  construct a diagram of the desired form where all the horizontal maps are monic.

We now use a modification of Riedtmann's proof that a Riedtmann-sequence implies degeneration. 
 We want to find a family of representations $\{\nu^t\}_{t\in S}\subseteq T_d(\Lambda)$, where $S$ is an open subset of $k$, $\nu^t\in U_d(k)*\mu$ for all $t\neq 0$, and $\nu^0\in U_d(k)*\nu$. We choose a basis $B=\{\mathbf{b}_1,\ldots,\mathbf{b}_d\}$ for a complement of $\im \smatr{f_d\\g_d}$ in $X_d\oplus M_d$, in such a way that $\mathbf{b}_i\in X_i\oplus M_i$ for all $i$. Let $V$ be the span of $B$. Then we explicitly construct the modules $N_d^t$ that will correspond to the representations $\nu^t$. For each $t\in k$ we have a homomorphism $$\phi_t:\xymatrix{X_d\ar[r]^{\smatr{f_d+t\cdot 1_{X_d}\\g_d}}&X_d\oplus M_d}.$$ Let $S$ be the set of all $t\in k$ such that $\phi_t$ is a monomorphism and $\im \phi_t$ is a complement of $V$. As a vector space, $N_d^t$ is $V$. To multiply with an element in $\Lambda$, we multiply in $X_d\oplus M_d$ and project the product onto $N_d^t$ along the image of $\phi_t$. For $t\neq 0$ $\phi_t$ is a split monomorphism, so we get an isomorphism between $N_d^t$ and $M_d$. Restrictions of this yields an isomorphism between composition series, and thus we get that $\nu^t\in U_d(k)*\mu$. The map sending $t$ to $\nu^t$ is continuous, so $\nu^0$ must be in $\overline{U_d(k)*\mu}$.

To show the other implication, we embed $T_d(\Lambda)$ in $\md_a(\U_d(\Lambda))$, where $a=\frac{d(d+1)}{2}$. Let $\{\lambda_1=1_\Lambda,\lambda_2,\ldots,\lambda_n\}$ be a generating set of $\Lambda$, and let $E_{i,j}$ denote the matrix where the $j$th entry of the $i$th row is $1$, and all other entries are $0$. Then $\U_d(\Lambda)$ is generated by the matrices $$L_j=\matr{\lambda_j&\cdots&0\\\vdots&\ddots&\vdots\\0&\cdots&\lambda_j}$$ for $1\leq j\leq n$, $E_{i,i}$ for $1\leq i\leq d$ and $E_{i,i+1}$ for $1\leq i\leq d-1$. Let $\psi:T_d(\Lambda)\to \md_a\U_d(\Lambda)$ be the morphism given by the following block matrices. Here $I_n$ denotes the $n\times n$ identity matrix and $0_n$ the $n\times n$ zero matrix.
$$\psi(\mu)(L_j)=\matr{\mu_1(\lambda_j)&0&&0\\
0&\mu_2(\lambda_j)&&0\\
&&\ddots&\\
0&0&&\mu_d(\lambda_j)}$$
$$\psi(\mu)(E_{i,i})=\matr{0_1&&0&0&0\\&\ddots&&&\\0&&0_{i-1}&0&0\\0&&0&I_i&0\\0&&0&0&0}$$

$$\psi(\mu)(E_{i,i+1})=\matr{0_1&&0&0&0\\&\ddots&&&\\0&&0_{i}&I_i&0\\0&&0&0&0}$$

Clearly $\psi$ is a morphism of varieties, and $U_d(k)$-orbits in $T_d(\Lambda)$ are mapped into $\GL_a(k)$-orbits in $\md_a\U_d(\Lambda)$. Thus $\nu\in\overline{U_d(k)*\mu}$ implies $\psi(\nu)\in\overline{\GL_a(k)*\psi(\mu)}$, and by Theorem \ref{RZ} we then have an exact sequence of $\U_d(\Lambda)$-modules $0\to \hat{X}\to \hat{X}\oplus\hat{M}\to \hat{N}\to 0$. Since $\md \U_d(\Lambda)\simeq \lrep \A_d$, this gives us an exact sequence in $\lrep \A_d$, which is the commutative diagram we are looking for.

\end{proof}

\section{Smaller varieties of triangular representations}\label{trianglequiver}

When studying degeneration of modules, one can replace $\md_d \Lambda$ with a variety of quiver representations, which is usually much smaller. We want to find a similar variety smaller than $T_d(\Lambda)$. 

As in the previous section, let $k$ be an algebraically closed field, and let $\Lambda$ be a basic finite-dimensional $k$-algebra. Then there is a quiver $Q$ and a set of admissible relations $\rho$ such that $\Lambda\simeq kQ/\langle\rho\rangle$. Let $\mathbf{d}=(d_1,\ldots,d_n)$ be a dimension vector over $Q$, and let $d=d_1+\ldots+d_n$.

In the path algebra of a quiver we have some distinguished idempotents, namely the trivial paths $\{e_1,\ldots,e_n\}$. Choosing suitable idempotent matrices $A_i\in\M_d(k)$, we can identify $\rep_{\mathbf{d}}(Q,\rho)$ with the subvariety of $\md_d\Lambda$ consisting of all representations $\mu$ such that $\mu(e_i)=A_i$. We want to construct  a similar subvariety of $T_d(\Lambda)$.

Recall that a set of idempotents $\{e_1, \ldots,e_n\}$ in $\Lambda$ is called \emph{orthogonal} if $e_ie_j=0$ when $i\neq j$, and a non-zero idempotent is called \emph{primitive} if it cannot be written as a sum of two non-zero orthogonal idempotents. An orthogonal set of primitive idempotents is called \emph{complete} if it is not a proper subset of a larger orthogonal set of primitive idempotents. If the orthogonal set $\{e_1, \ldots,e_n\}$ is complete, then for any simple $\Lambda$-module $S$ we have $S\simeq e_i\Lambda/\rad e_i\Lambda$ for some $i$. The set of trivial paths in a path algebra is an example of a  complete orthogonal set of primitive idempotents. 

Let $E=\{e_1\ldots,e_n\}\subseteq \Lambda$ be an orthogonal set of primitive idempotents. We want to fix some idempotent matrices $A_i\in\U_d(\Lambda)$ and look at the subvariety of $T_d(\Lambda)$ consisting of representations $\mu$ such that $\mu(e_i)=A_i$. When we make this restriction in $\md_d\Lambda$, we go from having representations of all $d$-dimensional modules to having just those with a particular set of composition factors. When we do the same in $T_d(\Lambda)$, the sequence in which the factors occur in the composition series also matters.

\begin{prop}\label{simultri}
Let $M$ and $N$ be $d$-dimensional $\Lambda$-modules. The following are equivalent:
\begin{enumerate}
\item \label{comp}There exist composition series $(0)=M_0\subseteq M_1\subseteq \cdots \subseteq M_d=M$ and $(0)=N_0\subseteq N_1\subseteq \cdots \subseteq N_d=N$ such that $M_i/M_{i-1}\simeq N_i/N_{i-1}$ for $1\leq i\leq d$.
\item \label{triangle}For any orthogonal set $E$ of  idempotents in $\Lambda$, there exist triangular representations $\mu,\nu\in \md_d\Lambda$ of $M$ and $N$ respectively, such that $\mu(e)=\nu(e)$ for all $e\in E$.

\item \label{trianglex} There exists a complete orthogonal set $E$ of primitive  idempotents in $\Lambda$ and triangular representations $\mu,\nu\in \md_d\Lambda$ of $M$ and $N$ respectively, such that $\mu(e)=\nu(e)$ for all $e\in E$.

\end{enumerate}
\end{prop}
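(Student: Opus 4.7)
The plan is to prove the cyclic implications $(1) \Rightarrow (2) \Rightarrow (3) \Rightarrow (1)$. The last two are formal once one uses that $\Lambda$ is basic over an algebraically closed field, so every simple $\Lambda$-module is $1$-dimensional over $k$ and every idempotent $e \in \Lambda$ acts on any simple as a scalar in $\{0,1\}$. The content lies in constructing, from matched composition series, triangular representations that agree on a prescribed orthogonal set of idempotents.

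For $(1) \Rightarrow (2)$, fix composition series $M_\bullet$ and $N_\bullet$ with $S_i := M_i/M_{i-1} \simeq N_i/N_{i-1}$ and an orthogonal set $E = \{e_1,\dots,e_m\}$ of idempotents. Adjoin $e_0 := 1 - \sum_{e \in E} e$; this is idempotent and orthogonal to all $e \in E$, so $E' = E \cup \{e_0\}$ is an orthogonal set summing to $1_\Lambda$. Each $e \in E'$ acts on the $1$-dimensional $S_i$ as a scalar $\epsilon_i(e) \in \{0,1\}$, and since $\sum_{e \in E'} \epsilon_i(e) = 1$ exactly one element $e(i) \in E'$ satisfies $\epsilon_i(e(i)) = 1$. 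The $k$-linear map $e(i)M_i \to S_i$ obtained by restricting the projection $M_i \twoheadrightarrow S_i$ is surjective, because its image is $e(i)M_i / (e(i)M_i \cap M_{i-1}) = e(i)M_i/e(i)M_{i-1} \simeq e(i)S_i = S_i$; so choose $x_i \in e(i)M_i$ mapping to a nonzero element of $S_i$. By orthogonality we have $e \cdot x_i = \epsilon_i(e)\,x_i$ for every $e \in E'$. The $x_i$ form a basis of $M$ compatible with $M_\bullet$, so yield a triangular representation $\mu \in T_d(\Lambda)$ with $\mu(e) = \mathrm{diag}(\epsilon_1(e),\dots,\epsilon_d(e))$ for all $e \in E$. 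Running the identical construction on $N$ gives a triangular $\nu$ for which $\nu(e)$ is the same diagonal matrix, since $\epsilon_i(e)$ depends only on the isomorphism class of $S_i$, which is shared by $M$ and $N$.

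The implication $(2) \Rightarrow (3)$ is immediate: a basic finite-dimensional $k$-algebra admits a complete orthogonal set of primitive idempotents, which is in particular an orthogonal set to which $(2)$ applies. For $(3) \Rightarrow (1)$, let $E = \{e_1,\dots,e_n\}$ be the complete orthogonal set from the hypothesis and recover the composition series $M_\bullet$ and $N_\bullet$ from $\mu$ and $\nu$ as in the construction preceding Theorem~\ref{RZ-triangle}. The simple $M_i/M_{i-1}$ is $1$-dimensional and $\lambda \in \Lambda$ acts on it as the $(i,i)$-entry of $\mu(\lambda)$; analogously for $N_i/N_{i-1}$ with $\nu$. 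Since $\sum_j e_j = 1$ and each $\mu(e_j)_{ii} \in \{0,1\}$, exactly one index $j$ gives $\mu(e_j)_{ii} = 1$, and that index identifies the simple as $\Lambda e_j/\rad \Lambda e_j$. The hypothesis $\mu(e_j) = \nu(e_j)$ therefore forces $M_i/M_{i-1} \simeq N_i/N_{i-1}$.

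I expect no serious obstacle; the only subtle point is the observation that for basic $\Lambda$ over algebraically closed $k$ every idempotent acts as $0$ or $1$ on each simple, which is what permits the simultaneous diagonalization of $\mu(e)$ for all $e \in E$ by a single basis adapted to the composition series.
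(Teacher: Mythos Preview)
Your proof is correct and rests on the same core idea as the paper's: choose basis vectors $x_i \in e(i)M_i$ adapted to the composition series so that every idempotent in the set acts diagonally, and for the converse read the composition factors off the diagonal of $\mu(e_j)$. Your execution is, however, more direct than the paper's in two ways. First, the paper proves both $(1)\Rightarrow(2)$ and $(3)\Rightarrow(1)$ by induction on $d$, building the basis one vector at a time, while you construct the entire basis at once; this is a cosmetic difference. Second, and more substantively, for $(1)\Rightarrow(2)$ the paper first refines the given orthogonal set $E$ to a complete orthogonal set of \emph{primitive} idempotents before proceeding, whereas you simply adjoin $e_0 = 1 - \sum_{e\in E} e$ so that $E'$ sums to $1$. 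Since any idempotent acts as a scalar in $\{0,1\}$ on a one-dimensional simple, primitivity is irrelevant, and your reduction is both shorter and avoids the (slightly delicate) step of checking that agreement on the refined set propagates back to the original $E$. The paper's reduction buys nothing additional here.
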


\begin{proof}

We first show that \ref{comp} implies \ref{triangle}. Let $E$ be an orthogonal set of idempotents. Since any idempotent can be written as a sum of primitive idempotents, and any orthogonal set can be expanded to a complete orthogonal set, we may assume that $E$ is a complete orthogonal set of primitive idempotents. 

When $d=1$, $\ref{comp}\Rightarrow\ref{triangle}$ is obvious. Assume it holds for $d=l-1$ and let $M$ and $N$ be $l$-dimensional modules satisfying \ref{comp}. Then $M_{l-1}$ and $N_{l-1}$ have triangular representations $\mu$ and $\nu$ where $\mu(e)=\nu(e)$ for all $e\in E$. We now want to construct suitable bases for $M$ and $N$. Let  
$(m_1,\ldots,m_{l-1})$ and $(n_1,\ldots, n_{l-1})$ be  bases for $M_{l-1}$ and $N_{l-1}$ corresponding to $\mu$ and $\nu$. Choose elements $m\in M\setminus M_{l-1}$ and $n\in N\setminus N_{l-1}$. Since $M/M_{l-1}$ is simple there is exactly one element $e\in E$ such that $eM/M_{l-1}\neq0$, and since $M/M_{l-1}\simeq N/N_{l-1}$ we also have $eN/N_{l-1}\neq0$. We set $m_l=em$ and $n_l=en$. Then $(m_1,\ldots,m_l)$ and $(n_1,\ldots,n_l)$ are bases for $M$ and $N$, and we let $\mu'$ and $\nu'$ be the corresponding representations. 

We now have that for any $x\in\Lambda$,
$$\mu'(x)=\left(\begin{array}{ccc|c} &&&s^x_{1}\\&\mu(x_i)&&\vdots\\&&&s^x_{l-1}\\\hline 0&\cdots&0&s^x_{l}\end{array}\right)$$ where $s^x_i\in k$. The $l-1$ first entries in row $l$ are all $0$ because $M_{l-1}\subseteq M$ is a submodule. Since $\mu(x)$ is upper triangular, $\mu'(x)$ is too. Thus we have that $\mu'$ is triangular. Similarly we see that $\nu'$ is triangular. Furthermore we have 
$$\mu'(e)=\left(\begin{array}{ccc|c} &&&0\\&\mu(e)&&\vdots\\&&&0\\\hline 0&\cdots&0&1\end{array}\right)=\nu'(e),$$
and for any other $e'\in E$ we have
$$\mu'(e')=\left(\begin{array}{ccc|c} &&&0\\&\mu(e')&&\vdots\\&&&0\\\hline 0&\cdots&0&0\end{array}\right)=\nu'(e').$$
Thus we have $\mu'(e)=\nu'(e)$ for all $e\in E$. By induction we get that $\ref{comp}\Rightarrow\ref{triangle}$.

Obviously \ref{triangle} implies \ref{trianglex}, so it remains to show that \ref{trianglex} implies \ref{comp}. Again this is obvious for $d=1$. Assume that it holds for $d=l-1$ and let $M$ and $N$ be $l$-dimensional modules satisfying \ref{trianglex}. Let $(m_1,\ldots,m_l)$ and $(n_1,\ldots,n_l)$ be the bases corresponding to $\mu$ and $\nu$. Since $\mu$ is triangular, $\{m_1,\ldots,m_{l-1}\}$ spans a submodule which we call $M_{l-1}$. We construct $N_{l-1}$ in the same way. $M_{l-1}$ and $N_{l-1}$ satisfy \ref{trianglex}, so by assumption they also satisfy \ref{comp}. All that is left to check is that $M/M_{l-1}\simeq N/N_{l-1}$. Let $x\in E$ be the idempotent with $xM/M_{l-1}\neq0$. Then we have $$xm_l\not\in M_{l-1} \Leftrightarrow \mathbf{u}_l^T\mu(x)\mathbf{u}_l=\mathbf{u}_l^T\nu(x)\mathbf{u}_l\neq0\Leftrightarrow xn_l\not\in N_{l-1}\Leftrightarrow xN/N_{l-1}\neq0,$$ which shows that $M/M_{l-1}\simeq N/N_{l-1}$.
\end{proof}

Two modules may have the same dimension vector, yet not have compatible composition series as above. Thus, when we restrict to triangular representations with fixed values on $E$, we get representations of at most one of them.

\begin{ex}
Let $Q$ be the quiver $\xymatrix{1\ar@<1ex>[r]^\alpha & 2\ar[l]^\beta}$, and let $\Lambda=kQ/(\alpha\beta,\beta\alpha)$. $\Lambda$ is generated by $\{e_1,e_2,\alpha,\beta\}$, where $e_i$ is the trivial path corresponding to the vertex $i$. Consider the quiver representations
$$M:\xymatrix{k\ar@<1ex>[r]^1&k\ar[l]^0},\qquad N:\xymatrix{k\ar@<1ex>[r]^0&k\ar[l]^1}.$$

$M$ and $N$ both have simple socles, but the socles are not isomorphic. Thus they do not satisfy statement \ref{comp} in Proposition \ref{simultri}. $\{e_1,e_2\}$ is a complete set of primitive orthogonal idempotents, so if $\mu$ and $\nu$ are representations of $M$ and $N$, and we have $\mu(e_1)=\nu(e_1)$ and $\mu(e_2)=\nu(e_2)$, then by Proposition \ref{simultri} $\mu $ and $\nu$ cannot both be triangular. 

For example, let $\mu,\nu\in\md_2\Lambda$ be the functions given by $$(\mu(e_1),\mu(e_2),\mu(\alpha),\mu(\beta))=\left(\matr{1&0\\0&0},\matr{0&0\\0&1},\matr{0&0\\1&0},\matr{0&0\\0&0}\right),$$
$$(\nu(e_1),\nu(e_2),\nu(\alpha),\nu(\beta))=\left(\matr{1&0\\0&0},\matr{0&0\\0&1},\matr{0&0\\0&0},\matr{0&1\\0&0}\right).$$
Then $\mu$ represents $M$ and $\nu$ represents $N$. We see that $\mu(e_i)=\nu(e_i)$ for $i=1,2$ but $\mu(\alpha)$ is not upper triangular, so $\mu$ is not a triangular representation. If we instead use a triangular representation of $M$, say $\mu'$ given by 
$$(\mu'(e_1),\mu'(e_2),\mu'(\alpha),\mu'(\beta))=\left(\matr{0&0\\0&1},\matr{1&0\\0&0},\matr{0&1\\0&0},\matr{0&0\\0&0}\right),$$
we get $\mu'(e_i)\neq\nu(e_i)$ (and in fact the only nonzero idempotent $e$ such that $\mu'(e)=\nu(e)$ is the identity).

\end{ex}

So we want an analogue of dimension vectors that also records the sequence of the composition factors.

\begin{defn}
The \emph{composition vector} of a composition series $(0)=M_0\subseteq M_1\subseteq\ldots\subseteq M_d$ is an element $\mathbf{c}=(c_1,\ldots,c_d)\in E\times\ldots\times E$ such that for all $i$ we have $M_i/M_{i-1}\simeq c_i\Lambda/\rad c_i\Lambda$.
\end{defn}

Now given a composition vector $\mathbf{c}$ we can construct a subvariety $T_\mathbf{c}(\Lambda)\subseteq T_d(\Lambda)$ in the following way. For $1\leq i\leq n$ let $A_i^{\mathbf{c}}$ be the diagonal $d\times d$-matrix where the $j$th element on the diagonal is $1$ if $c_j=e_i$ and $0$ otherwise. Then let $T_{\mathbf{c}}(\Lambda)=\{\mu\in T_d(\Lambda) \mid  \mu(e_i)=A_i^{\mathbf{c}}\}$. Any representation in $T_{\mathbf{c}}(\Lambda)$ represents a composition series with composition vector $\mathbf{c}$, and from Proposition \ref{simultri} we see that all composition series with this composition vector are represented in $T_{\mathbf{c}}(\Lambda)$.

We also need a suitable group variety to act on $T_{\mathbf{c}}(\Lambda)$. Since $T_{\mathbf{c}}(\Lambda)$ is a closed subset both in $T_d(\Lambda)$ and in $\md_d\Lambda$, we could use its normalizer in either $\GL_d(k)$ or $U_d(k)$. We denote these normalizers $N_{\GL_d(k)}(T_{\mathbf{c}}(\Lambda))$ and   $N_{U_d(k)}(T_{\mathbf{c}}(\Lambda))$ respectively. (For a proof that the normalizer of a closed set is a group variety, see for example \cite{groups}, Lemma 8.3.1)

If we choose $N_{\GL_d(k)}(T_{\mathbf{c}}(\Lambda))$, then the group action no longer preserves composition series. This is shown in the next example.

\begin{ex}\label{badgroup}
Let $\Lambda$ be the Kronecker algebra as in Example \ref{Kron}, and consider the modules $R$ and $R_2$ given by the following quiver representations. 
$$R:\xymatrix{k \ar@<1ex>[r]^1\ar[r]_0&k},\qquad R_2:\xymatrix{k^2 \ar@<1ex>[r]^{\smatr{1&0\\0&1}}\ar[r]_{\smatr{0&0\\1&0}}&k^2}$$
$R_2$ has triangular representations $\mu$ and $\nu$ given by 
$$\mu(e_1)=\nu(e_1)=\matr{0&0&0&0\\0&0&0&0\\0&0&1&0\\0&0&0&1}$$
$$\mu(e_2)=\nu(e_2)=\matr{1&0&0&0\\0&1&0&0\\0&0&0&0\\0&0&0&0}$$ $$\mu(\alpha)=\nu(\alpha)=\matr{0&0&1&0\\0&0&0&1\\0&0&0&0\\0&0&0&0}$$ $$\mu(\beta)=\matr{0&0&0&0\\0&0&1&0\\0&0&0&0\\0&0&0&0},\qquad \nu(\beta)=\matr{0&0&0&1\\0&0&0&0\\0&0&0&0\\0&0&0&0}$$

The corresponding composition series are $0\subseteq S_2\subseteq S_2^2\subseteq P_1\subseteq R_2$ for $\mu$ and $0\subseteq S_2\subseteq S_2^2\subseteq R\oplus S_2\subseteq R_2$ for $\nu$. They both have composition vector  $\mathbf{c}=(e_2,e_2,e_1,e_1)$, but they are not isomorphic.  $T_{\mathbf{c}}(\Lambda)$ is isomorphic to the variety of quiver representations, $\rep_{(2,2)}(Q)\simeq\M_2(k)\times \M_2(k)$, and we have $N_{\GL_4(k)}(T_{\mathbf{c}}(\Lambda))\simeq \GL_2(k)\times \GL_2(k)$. Since $\mu$ and $\nu$ both represent the module $R_2$, they are in the same $N_{\GL_4(k)}(T_{\mathbf{c}}(\Lambda))$-orbit.
\begin{comment}
A different composition vector may lead to a different variety, even if the dimension vectors are the same. For example, $R_2$ also has composition series with composition vector $\mathbf{b}=(e_2,e_1,e_2,e_1)$. The variety $T_{\mathbf{b}}(\Lambda)$ is not isomorphic to $T_{\mathbf{a}}(\Lambda)$, instead we have $T_{\mathbf{b}}(\Lambda)\simeq\U_2(k)\times\U_2(k)$.
\end{comment}
\end{ex}

Example \ref{badgroup} shows that $N_{\GL_d(k)}(T_{\mathbf{c}}(\Lambda))$ is a poor choice for the group action.
The action of $N_{U_d(k)}(T_{\mathbf{c}}(\Lambda))$ on the other hand, obviously does preserve composition series. In fact, we can restate Theorem \ref{RZ-triangle} with $T_{\mathbf{c}}(\Lambda)$ in the place of $T_d(\Lambda)$.

\begin{thrm}
Let $\mathbf{c}$ be a composition vector, let  $\mu,\nu\in T_{\mathbf{c}}(\Lambda)$, and let respectively $\xymatrix{M_1\ar@{^{(}->}[r]^{i_1}&\ldots\ar@{^{(}->}[r]^{i_{d-1}}&M_d}$ and  $\xymatrix{N_1\ar@{^{(}->}[r]^{j_1}&\ldots\ar@{^{(}->}[r]^{j_{d-1}}&N_d}$ be the corresponding composition series. Then $\nu\in\overline{N_{U_d(k)}(T_{\mathbf{c}}(\Lambda))*\mu}$ if and only if there exists a commutative diagram $$\xymatrix{0\ar[d]&0\ar[d]&&0\ar[d]\\
X_1\ar[r]^{h_1}\ar[d]&X_2\ar[r]^{h_2}\ar[d]&\cdots\ar[r]^{h_{d-1}}&X_d\ar[d]\\
X_1\oplus M_1\ar[r]^{\smatr{h_1&0\\0&i_1}}\ar[d]&X_2\oplus M_2\ar[r]^{\smatr{h_2&0\\0&i_2}}\ar[d]&\cdots\ar[r]^{\smatr{h_{d-1}&0\\0&i_{d-1}}}&X_d\oplus M_d\ar[d]\\
N_1\ar[d]\ar[r]^{j_1}&N_2\ar[d]\ar[r]^{j_2}&\cdots\ar[r]^{j_{d-1}}&N_d\ar[d]\\
0&0&&0}$$ with exact columns.
\end{thrm}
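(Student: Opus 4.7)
The $(\Rightarrow)$ direction is immediate from Theorem~\ref{RZ-triangle}: since $N_{U_d(k)}(T_{\mathbf{c}}(\Lambda))\subseteq U_d(k)$, we have $\overline{N_{U_d(k)}(T_{\mathbf{c}}(\Lambda))*\mu}\subseteq \overline{U_d(k)*\mu}$, and Theorem~\ref{RZ-triangle} then provides the diagram.

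For $(\Leftarrow)$, my plan is to rerun the first half of the proof of Theorem~\ref{RZ-triangle} with a refined basis choice so that the approximating family $\{\nu^t\}_{t\in S}$ lives in $T_{\mathbf{c}}(\Lambda)$, not merely in $T_d(\Lambda)$. The starting observation is that each $A_i^{\mathbf{c}}$ is a diagonal matrix, so an element $g\in U_d(k)$ belongs to $N_{U_d(k)}(T_{\mathbf{c}}(\Lambda))$ iff $gA_i^{\mathbf{c}}=A_i^{\mathbf{c}}g$ for every $i$; consequently, for any $\mu\in T_{\mathbf{c}}(\Lambda)$,
\[
 U_d(k)*\mu \cap T_{\mathbf{c}}(\Lambda) \;=\; N_{U_d(k)}(T_{\mathbf{c}}(\Lambda))*\mu,
\]
because $g*\mu\in T_{\mathbf{c}}(\Lambda)$ forces $gA_i^{\mathbf{c}}g^{-1}=A_i^{\mathbf{c}}$. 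Hence, once the family is known to satisfy $\nu^t\in T_{\mathbf{c}}(\Lambda)$, the rest of the argument from Theorem~\ref{RZ-triangle}—including the limit $\nu^0$ and the identity $\nu^0\in U_d(k)*\nu\cap T_{\mathbf{c}}(\Lambda) = N_{U_d(k)}(T_{\mathbf{c}}(\Lambda))*\nu$—immediately yields $\nu\in\overline{N_{U_d(k)}(T_{\mathbf{c}}(\Lambda))*\mu}$.

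The main technical point is thus choosing the basis $B=\{b_1,\ldots,b_d\}$ of the complement $V$ of $\im\phi_0$ so that, in addition to $b_j\in X_j\oplus M_j$ (as in the original proof), one also has $b_j\in e_{c_j}(X_d\oplus M_d)$. This can be done inductively: the $j$-th column of the given diagram identifies $(X_j\oplus M_j)/\phi_0(X_j)\cong N_j$, and the definition of composition vector forces $N_j/N_{j-1}\simeq c_j\Lambda/\rad c_j\Lambda$, the simple module on which $e_{c_j}$ acts as the identity. So any lift to $X_j\oplus M_j$ of a generator of $N_j/N_{j-1}$, after multiplication by $e_{c_j}$, gives an admissible $b_j$, and $V=\bigoplus_j k\,b_j$ satisfies all required compatibilities. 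With this choice, since $b_j\in e_{c_j}(X_d\oplus M_d)$ and the $e_i$'s are orthogonal, the projection $\pi_t$ onto $V$ along the $\Lambda$-submodule $\im\phi_t$ gives
\[
 \nu^t(e_i)\cdot b_j \;=\; \pi_t(e_i b_j) \;=\; \delta_{i,c_j}\,\pi_t(b_j) \;=\; \delta_{i,c_j}\,b_j,
\]
so $\nu^t(e_i)=A_i^{\mathbf{c}}$ and $\nu^t\in T_{\mathbf{c}}(\Lambda)$ for every $t\in S$. The main obstacle is precisely this simultaneous basis compatibility between the filtration $(X_j\oplus M_j)_j$ and the idempotent decomposition; everything else is a straightforward upgrade of the preceding proof.
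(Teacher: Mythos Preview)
Your proposal is correct and follows essentially the same approach as the paper: the paper's proof is the single sentence ``The proof is the same as for Theorem~\ref{RZ-triangle}, we just have to choose the basis for $V$ a little more carefully; here we need to have $c_ib_i=b_i$ for all $i$,'' which is precisely your refined basis condition $b_j\in e_{c_j}(X_d\oplus M_d)$. You have supplied the details the paper omits---the characterization of $N_{U_d(k)}(T_{\mathbf{c}}(\Lambda))$ via commutation with the $A_i^{\mathbf{c}}$, the resulting identity $U_d(k)*\mu\cap T_{\mathbf{c}}(\Lambda)=N_{U_d(k)}(T_{\mathbf{c}}(\Lambda))*\mu$, and the verification that $\nu^t(e_i)=A_i^{\mathbf{c}}$---but the strategy is identical.
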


The proof is the same as for Theorem \ref{RZ-triangle}, we just have to choose the basis for $V$ a little more carefully. Here we need to have $c_ib_i=b_i$ for all $i$.

\end{document}